\documentclass[12pt]{article}
\usepackage{amsfonts}
\usepackage{amsmath}
\usepackage{amsthm}
\usepackage{graphicx}
\usepackage{hyperref}
\usepackage[latin1]{inputenc}
\usepackage[
backend=biber,
style=alphabetic,
sorting=ynt
]{biblatex}

\title{The Mondrian Puzzle: A Connection to Number Theory}
\author{Cooper O'Kuhn}
\date{\today}

\begin{document}
\maketitle

\begin{abstract}
We obtain partial progress towards answering the question of whether the quantity defined in the Mondrian Puzzle can ever equal 0. More specifically, we obtain a nontrivial lower bound for the cardinality of the set $\{n\leq x: M(n)\neq0 \}$ where $M(n)$ is the quantity appearing in the Mondrian Puzzle and $x$ is the usual quantity that one thinks of as tending to infinity. More surprisingly, we do so by use of number theoretic techniques in juxtaposition to the innately geometric nature of the problem.
\end{abstract}

\section{Introduction}
The Mondrian Puzzle is a problem based on the artwork of the Dutch artist Piet Mondrian. His paintings are quite unique, simply consisting of primary-colored configurations of tesselated rectangles. The idea of the puzzle is that an art critic has ordered Mondrian to only create paintings whose rectangles are all incongruent to one another and only have integer side lengths. Furthermore, he can only use a square canvas whose side length is also an integer. Aggravated, Mondrian still wants to create works whose rectangles are all as close as possible in area (note: the art critic said nothing about the area of the rectangles). This leads one to make the following definitions.

Let $n\geq3$ be a natural number. Let $M(n)$ be the minimal possible difference in areas of the largest and smallest rectangles in a set of incongruent, integer-sided rectangles that tesselate an $n$ by $n$ square. It is an open question whether there exists an integer $n$ such that $M(n)=0$, but it is widely believed that such an $n$ should not exist. In this paper, we say something about the density of numbers $n$ that satisfy $M(n)\neq0$ in a given range $[1,x].$ Of course conjecturally, the density of these numbers should be 1. Instead, we get a lower bound of this quantity that roughly takes the shape $\frac{c\log(\log(x))}{\log(x)}$ for some constant $c>0$. More specifically, we have the following. 

\newtheorem*{theorem*}{Theorem}
\newtheorem{lemma}{Lemma}

\begin{theorem*} For all $x\geq3$ and all $\epsilon>0$, we have
\begin{equation}
|\{n\leq x: M(n)\neq 0 \}|\geq \frac{C_\epsilon x\log(\log(x))}{\log(x)}\left(1+O_\epsilon\left(\frac{\log(\log(x))}{\log(x)}\right)\right),
\end{equation}
 where $$C_\epsilon =\frac{1}{e^\gamma(2\log(2)+\epsilon)},$$ and $\gamma$ is the Euler-Mascheroni constant.
\end{theorem*}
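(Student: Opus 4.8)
The plan is to convert the geometric condition $M(n)=0$ into a purely arithmetic obstruction, exhibit an explicit family of $n$ that provably violates the obstruction (hence has $M(n)\neq 0$), and then count that family by elementary analytic number theory. For the reduction, suppose $M(n)=0$. Then the $n\times n$ square admits a dissection into some number $k\ge 2$ of pairwise incongruent integer-sided rectangles of a common area $A$ (here I take the standard convention that a genuine Mondrian dissection uses at least two pieces, so $k\ge 2$). Comparing total areas gives $kA=n^2$, so $A$ is a positive integer dividing $n^2$ with $k=n^2/A$. Since an integer-sided rectangle of area $A$ is determined up to congruence by an unordered factorization $A=ab$, and there are exactly $\lceil d(A)/2\rceil$ of these (with $d$ the divisor function), a necessary condition for $M(n)=0$ is that some divisor $A\mid n^2$ with $A\le n^2/2$ satisfies $\lceil d(A)/2\rceil\ge n^2/A$. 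Contrapositively, if
\[
\left\lceil \tfrac12\, d(n^2/k)\right\rceil < k \qquad\text{for all } k\mid n^2 \text{ with } k\ge 2,
\]
then $M(n)\neq 0$, and this inequality is what I would aim to establish for a large set of $n$.

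Next I would exhibit a clean family satisfying this inequality. I claim that every product of two distinct primes $n=p\ell$ with $p,\ell\ge 5$ works. Indeed $n^2=p^2\ell^2$ has only nine divisors, so one checks the displayed inequality by hand for each of the eight values $k\ge 2$. The two binding cases are $k=\ell$, where $n^2/k=p^2\ell$ has $d=6$ and hence $\lceil d/2\rceil=3<\ell$, and the symmetric case $k=p$; for every other $k$ the area $n^2/k$ has at most four divisors and the inequality is immediate. The hypotheses $p,\ell\ge 5$ are used precisely to defeat the two binding cases. (Plain primes $n=p$ also satisfy the obstruction, but they are too sparse to influence the main term, so they may be set aside.)

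It then remains to count the integers $n=p\ell\le x$ with $5\le \ell<p$. Writing this count as $\sum_{5\le\ell\le\sqrt{x}}\bigl(\pi(x/\ell)-\pi(\ell)\bigr)$ — which enumerates each such semiprime exactly once, since the smaller prime of a semiprime $\le x$ is always $\le\sqrt{x}$ — and discarding $\ell\in\{2,3\}$ at a cost of $O(x/\log x)$, I would lower-bound the inner prime count by an elementary Chebyshev-type or sieve estimate and then evaluate the outer sum over $\ell$ by Mertens' theorems. The outer sum $\sum_{\ell\le\sqrt{x}}1/\ell=\log\log\sqrt{x}+O(1)$ supplies the $\log\log x$; the range $\ell\le\sqrt{x}$ together with the lower bound for the prime-counting function supplies the factor $2\log 2$; and Mertens' third theorem, $\prod_{p\le y}(1-1/p)^{-1}\sim e^{\gamma}\log y$, contributes the $e^{\gamma}$. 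Assembled, these give the main term $C_\epsilon x\log\log x/\log x$ with $C_\epsilon=1/(e^{\gamma}(2\log 2+\epsilon))$, the $\epsilon$ absorbing the error in Mertens' estimate and the $O_\epsilon(\cdot)$ collecting the secondary contributions.

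I expect the number-theoretic step to be the main obstacle. The reduction and the verification of the obstruction for $p\ell$ are finite, essentially bookkeeping once the convention $k\ge 2$ is fixed. By contrast, the inner count $\pi(x/\ell)$ must be bounded below \emph{uniformly} as $\ell$ ranges up to $\sqrt{x}$, where the argument $x/\ell$ sweeps the entire interval $[\sqrt{x},\,x/5]$; the error terms must therefore be controlled uniformly in $\ell$ and summed without swamping the main term. Extracting exactly the constant $C_\epsilon$ — rather than merely the correct order $x\log\log x/\log x$, which the Landau asymptotic for semiprimes already guarantees — is the delicate point, and it is precisely where the interplay between Mertens' theorem and the elementary lower bound for $\pi$ must be handled with care.
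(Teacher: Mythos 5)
Your proposal is correct, and after the shared first step it takes a genuinely different route from the paper. Your reduction is precisely the paper's Lemma 1: since $\tau_2^*(A)=\lceil \tau_2(A)/2\rceil$, your condition ``$\lceil d(n^2/k)/2\rceil<k$ for all divisors $k\geq2$ of $n^2$'' is identical to the paper's ``$d\,\tau_2^*(d)<n^2$ for all divisors $d<n^2$ of $n^2$.'' From there the paper never exhibits an explicit family: it weakens the obstruction through the chain $\tau_2^*\leq\tau_2$, then $\tau_2(n^2/k)\leq\tau_2(n^2)$, then the divisor bound $\tau_2(n^2)\leq g_\epsilon(x^2)$, so that the surviving integers are exactly the $z$-rough numbers with $z=x^{(2\log(2)+\epsilon)/\log(2\log(x))}$, and it counts these by the fundamental lemma of the Selberg sieve together with Mertens' product --- that is where both $e^\gamma$ and $2\log(2)+\epsilon$ in $C_\epsilon$ originate. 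You instead verify the obstruction \emph{exactly} on the semiprimes $n=p\ell$ with $5\leq\ell<p$ (your case check is right: the binding divisors $k=p$ and $k=\ell$ give $\lceil 6/2\rceil=3<5$, and all other divisors are immediate), and then count that family directly. This buys you a strictly stronger result: your outline (Chebyshev uniformly in $\ell\leq\sqrt{x}$ via $\log(x/\ell)\leq\log(x)$, then $\sum_{\ell\leq\sqrt{x}}1/\ell=\log(\log(x))+O(1)$ by Mertens) already yields a constant at least $\log(2)\approx0.69$, and the Prime Number Theorem would give the Landau constant $1$, whereas the paper's $C_\epsilon<1/(e^\gamma\cdot2\log(2))\approx0.405$. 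Since the theorem is a lower bound and its $O_\epsilon$ factor absorbs small $x$, your bound implies the stated inequality with room to spare; in effect you realize, by hand, the improvement the paper's Section 4 only sketches via the decomposition into $T_j$ and $P_j$. Your worry about uniformity in $\ell$ is also milder than you fear: a Chebyshev-type bound is uniform for all $y\geq2$, so no delicate error management is needed at this level of precision.

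One caveat: your closing narrative about extracting exactly $C_\epsilon$ is misguided, though harmlessly so. The constants $e^\gamma$ and $2\log(2)$ are artifacts of the paper's sieve route (the exponent $\log(2)$ in the divisor bound, doubled because it is applied to $n^2$, and Mertens' product evaluated at $z=g_\epsilon(x^2)$); they do not and should not emerge from a semiprime count, and Mertens' third theorem plays no role in your sum. You should simply prove your asymptotic lower bound with its natural constant and observe that it exceeds $C_\epsilon$ for every $\epsilon>0$, rather than trying to manufacture the factors $2\log 2$ and $e^\gamma$ from the range $\ell\leq\sqrt{x}$.
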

If one divides both sides of (1) by $x$, one can view this as a statement regarding density. This problem intuitively seems very geometric in nature. However, as will become clear shortly, this problem has strong ties to very deep results in Number Theory, and thus the proof of (1) is very number theoretic in nature. 

\section{Notation} We adopt the following conventions throughout the paper. We let $x$ be some parameter tending to infinity. We further let $\epsilon$ be some small real quantity that may not be the same in each occurrence. We let $\alpha,a,b,d,h,j,k,l,n$ and $r$ exclusively represent positive integers, and $z$ represents a real quantity. Furthermore, $|\{.\}|$ denotes the cardinality of a set. We say $d|n$ if $d$ divides $n$, and we say $d||n$ if $d$ exactly divides $n$ in the sense that $d^2$ does not divide $n$; $\tau_2(n)$ denotes the number of divisors of $n$ (ex: $\tau_2(6)=4$ since $1|6, 2|6, 3|6,$ and $6|6$), and $\tau_2^*(n)$ denotes the number of unique representations of $n$ as a product of two natural numbers (ex: $\tau_2^*(6)=2$ since $6=1\times6=2\times3$). Notice the relationship $\tau_2^*(n)=\frac{\tau_2(n)+s(n)}{2},$ where $s(n)$ is the characteristic function of square numbers, due to the fact that the factors of $n$ are symmetric about $\sqrt{n}$. We say that a number a natural number $n$ is $z$-rough if every prime that divides $n$ is greater than $z$.

We further define $O(1)$ to be any quantity that remains bounded as $x$ tends to infinity, and we define $o(1)$ to be any quantity that tends to zero as $x$ tends to infinity. We say that $O(1)U=O(U)$ and $o(1)U=o(U)$ for any quantity $U$. We further write $O_z$ if the implied constant depends on $z$.  

\section{Proof of the Main Theorem}

The proof goes as follows. We are first and most importantly tasked with deriving a subset of the set $\{n\leq x: M(n)\neq0\}$ whose elements have a specific number theoretic structure. After this, we use a multitude of (generally simple) number theoretic manipulations to further reduce our problem to one of estimating the cardinality of $F(x,z)$, the set of all $z$-rough integers less than or equal to $x$, or a set of the form 
\begin{equation}
F(x,z):=\{n\leq x: (\forall d>1)(d|n\Rightarrow d>z)\}.
\end{equation}
The main challenge in reducing to a set like $F(x,z)$ is reducing the conditions of the set in such a way that the $z$ parameter is 1) completely independent of $n$ and 2) a nice smooth function, and thus most of the manipulations carried out below are done in an effort to do this. 

We first derive our subset. 
\begin{lemma} We have 
\begin{equation}
\{n\leq x: (\forall d<n^2)(d|n^2 \Rightarrow d\tau_2(d)<n^2)\}\subseteq\{n\leq x: M(n)\neq0\}.
\end{equation}
\end{lemma}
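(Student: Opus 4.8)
The plan is to prove the contrapositive: I would show that if $M(n)=0$, then $n$ fails the defining condition of the left-hand set, i.e. there exists a proper divisor $d$ of $n^2$ with $d\tau_2(d)\geq n^2$. So suppose $M(n)=0$. By definition this means the $n\times n$ square admits a tessellation by incongruent integer-sided rectangles in which the largest and smallest areas coincide; equivalently, \emph{all} rectangles in the tessellation share one common area, call it $A$. Writing $k$ for the number of rectangles and comparing total areas yields the fundamental identity $kA=n^2$.

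The next step is to observe that $A$ is a \emph{proper} divisor of $n^2$. Since a genuine tessellation of the square uses at least two rectangles (the single-rectangle dissection being excluded, as otherwise $M(n)=0$ would hold trivially for every $n$), we have $k\geq 2$, whence $A=n^2/k<n^2$, and $A\mid n^2$ because $k$ is an integer. Thus $A$ is precisely the kind of divisor $d<n^2$ featured in the defining condition.

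It remains to bound $k$ from above using the incongruence hypothesis. The key combinatorial point is that all $k$ rectangles have the same area $A$ yet are pairwise incongruent, so they realize $k$ distinct unordered factorizations $A=ab$; hence $k\leq\tau_2^*(A)$. Using the relation $\tau_2^*(A)=\frac{\tau_2(A)+s(A)}{2}$ together with $s(A)\leq 1\leq\tau_2(A)$, I obtain $\tau_2^*(A)\leq\tau_2(A)$, and therefore
\[
n^2=kA\leq A\,\tau_2^*(A)\leq A\,\tau_2(A).
\]
Taking $d=A$ then exhibits a proper divisor of $n^2$ violating the inequality $d\tau_2(d)<n^2$, which establishes the contrapositive and hence the claimed inclusion.

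I expect the main obstacle to be the combinatorial counting step $k\leq\tau_2^*(A)$: one must argue carefully that incongruence of integer-sided rectangles of a fixed area $A$ corresponds exactly to distinct unordered factorizations of $A$, so that at most $\tau_2^*(A)$ such rectangles can appear in any tessellation. The only other point demanding care is the exclusion of the degenerate single-rectangle dissection, which is what guarantees $A<n^2$ and keeps $A$ a \emph{proper} divisor; without this the candidate would be $d=n^2$, falling outside the range $d<n^2$ imposed in the statement.
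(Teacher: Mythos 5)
Your proposal is correct and follows essentially the same route as the paper: the area identity $kA=n^2$ forcing $A\mid n^2$, the exclusion of the trivial single-rectangle dissection to get $A<n^2$, and the injection of the incongruent rectangles into unordered factor pairs giving $k\leq\tau_2^*(A)$, with your contrapositive framing being just a cosmetic variant of the paper's argument by contradiction. The one small difference is that you insert the bound $\tau_2^*(A)\leq\tau_2(A)$ inside the proof so as to conclude the lemma exactly as stated with $\tau_2$, whereas the paper's proof actually ends with the $\tau_2^*$ version and handles the passage to $\tau_2$ only afterwards in its reduction (9) --- so your write-up is, if anything, slightly more faithful to the literal statement.
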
 
\begin{proof}
Arguing indirectly for the moment, suppose that there exists a positive integer $n\geq 3$ such that $M(n)=0$. Thus, there exists a set of incongruent integer-sided rectangles all of the same area, say, $d$ that tesselate an $n$ by $n$ square, a set which we denote by $S$. Notice that we may assume without loss of generality that $d$ is strictly less than $n^2$ since this corresponds to the set of rectangles that just contains the $n$ by $n$ square. Now, let $A$ be the collective area of all the rectangles in $S$. Since the rectangles in $S$ only have area $d$, one can see that we have
\begin{equation}
A=d|S|.
\end{equation}
Certainly, if $M(n)=0$, we must also have 
$$A=n^2$$
since this is the area of an $n$ by $n$ square. As a consequence of this requirement and (4), we have that 
$$d|n^2.$$ 

In an effort to aid a later argument, we can calculate an upper bound for $A$. In view of (4), we are left to bound $|S|$. To do this, map every rectangle in $S$ to the ordered pair containing its corresponding base and height, $(b_i,h_i)$. Since every rectangle in $S$ is incongruent to each other, the mapping from $S$ to the elements of the set $$\{(b_i,h_i):b_ih_i=d:b_i\geq h_i\}$$ is completely injective. Thus, we have 
\begin{equation}
|S|\leq |\{(b_i,h_i):b_ih_i=d:b_i\geq h_i\}|.
\end{equation}
One can see that the right hand side of (5) is the set of all ordered pairs of natural numbers whose product is $d$, and thus it has cardinality $\tau_2^*(d)$. We therefore have
\begin{equation}
A\leq d\tau_2^*(d).
\end{equation} 
Now, for the sake of contradiction, suppose that we also knew that the condition 
\begin{equation}
\forall d<n^2, d|n^2 \Rightarrow d\tau_2^*(d)<n^2
\end{equation}
held as in the lemma. This would imply that $A<n^2$ for any choice of $d$ by (6). This is a contradiction since we have shown it necessary that $A=n^2$ for some $d|n^2$ in order for $M(n)=0$ to hold, and thus $M(n)$ cannot equal 0 for any value of $n$ satisfying (7). Therefore, we have 
$$\{n\leq x: (\forall d<n^2)(d|n^2 \Rightarrow d\tau_2^*(d)<n^2))\}\subseteq\{n\leq x: M(n)\neq0\}$$
which concludes the proof. 

\end{proof}
 We will now obtain a lower bound for $$|\{n\leq x : (\forall d<n^2)(d|n^2 \Rightarrow d\tau^*_2(d)<n^2) \}|,$$ which will yield a lower bound for the set in (1) by Lemma 1 and the fact that for all sets $A$ and $B$, if $A\subseteq B$, then $|A|\leq|B|$. We will be repeatedly and implicitly using the fact that if one places ``stronger" restraints on a set (in our case, we are mainly referring to inputing stronger upper or lower bounds into the conditions of a set), one obtains a subset of the original set. Before this, we first make the change of variables $d=\frac{n^2}{k}$. Since this is equivalent to the statement $k=\frac{n^2}{d}$ which is an integer by the hypothesis $d|n^2$, we have that $k|n^2$. So, 
 \begin{multline*}
 |\{n\leq x: (\forall d<n^2)(d|n^2 \Rightarrow d\tau_2^*(d)<n^2))\}| = \\
 |\{n\leq x: (\forall k>1)(k|n^2 \Rightarrow \frac{n^2}{k}\tau_2^*\left(\frac{n^2}{k}\right)<n^2)\}|, 
  \end{multline*}
or equivalently
\begin{multline}   
|\{n\leq x: (\forall d<n^2)(d|n^2 \Rightarrow d\tau_2^*(d)<n^2))\}|= \\ 
|\{n\leq x: (\forall k>1)(k|n^2 \Rightarrow \tau_2^*\left(\frac{n^2}{k}\right)<k)\}|.
\end{multline}  
Recall the relation $\tau^*_2(n)=\frac{\tau_2(n)+s(n)}{2}$. Since $\tau_2(n)\geq 2$, for all $n$, this relation implies the inequality $\tau^*_2(n)\leq\tau_2(n)$ for all $n$. Thus, we can make the following reduction: 
\begin{multline}
|\{n\leq x: (\forall k>1)(k|n^2 \Rightarrow \tau_2^*\left(\frac{n^2}{k}\right)<k)\}|\geq \\
|\{n\leq x: (\forall k>1)(k|n^2 \Rightarrow \tau_2\left(\frac{n^2}{k}\right)<k)\}|.
\end{multline}
This is largely advantageous for us in our efforts to reduce as will soon become clear. Informally, it is because $\tau_2$ is a much more natural function to deal with than $\tau_2^*$. One begins to notice the extent of this phenomenon upon proving the following lemma in that the analogous statement for $\tau_2^*$ is significantly more tedious.

\begin{lemma} For all $d,n\in\mathbb{N}$, we have $d|n \Rightarrow\tau_2(d)\leq\tau_2(n)$.
\end{lemma}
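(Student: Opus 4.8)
The plan is to reduce the claim to the elementary observation that the divisors of $d$ form a subset of the divisors of $n$ whenever $d|n$, and then pass to cardinalities. The entire content of the lemma rests on the transitivity of divisibility, and the argument will parallel the injective-mapping style already used in the proof of Lemma 1.

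First I would fix $d$ and $n$ with $d|n$ and consider the set of divisors of $d$, namely $\{e:e|d\}$. For any such $e$, write $d=ef$ and $n=dg$ for positive integers $f,g$; then $n=efg$, so $e|n$. Hence every divisor of $d$ is a divisor of $n$, giving the inclusion $\{e:e|d\}\subseteq\{e:e|n\}$. The inclusion map between these two sets is the identity on integers and is therefore injective, so comparing cardinalities yields $\tau_2(d)\leq\tau_2(n)$, since $\tau_2(m)$ is by definition the size of $\{e:e|m\}$.

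Should one prefer a purely algebraic route, an alternative is to invoke the multiplicativity of $\tau_2$. Writing $d=\prod_i p_i^{a_i}$ and $n=\prod_i p_i^{b_i}$ over a common set of primes, the hypothesis $d|n$ forces $a_i\leq b_i$ for every $i$; then $\tau_2(d)=\prod_i(a_i+1)\leq\prod_i(b_i+1)=\tau_2(n)$, since each factor on the left is bounded by the corresponding factor on the right. Both arguments are short, and I expect no genuine obstacle here: the statement is essentially immediate once one recognizes the nested structure of divisor sets.

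The only point worth flagging is precisely the one the surrounding discussion anticipates. This clean monotonicity is special to $\tau_2$ and fails to be so immediate for $\tau_2^*$, because divisors are closed under the ``divides'' relation and hence nest perfectly, whereas the unordered two-factor representations counted by $\tau_2^*$ carry no such hereditary subset structure; moreover the correction term $s(n)$ in the identity $\tau_2^*=\tfrac{\tau_2+s}{2}$ does not behave monotonically under divisibility. This is exactly why the passage from $\tau_2^*$ to $\tau_2$ carried out in (10), immediately before the lemma, is worthwhile.
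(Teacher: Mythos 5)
Your primary argument is correct and is essentially identical to the paper's own proof: both establish the inclusion $\{e : e|d\} \subseteq \{e : e|n\}$ (the paper writes this as $L(d) \subseteq L(n)$) and then compare cardinalities, with your version merely spelling out the transitivity step $n = efg$ that the paper leaves implicit. Your alternative via multiplicativity is also valid but is supplementary; no gap in either route.
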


\begin{proof} Let $L(k)$ be the set of all divisors of $k$ so that 
$$|L(k)|=\tau_2(k).$$ 
Since any divisor of $d$ must also divide $n$ by the relation $d|n$, we have that 
$$L(d)\subseteq L(n).$$
Thus, we further have 
$$|L(d)|\leq|L(n)|,$$ 
or equivalently 
$$\tau_2(d)\leq\tau_2(n).$$
\end{proof}
This lemma allows us to simplify the conditions of the set further: 
\begin{multline} 
|\{n\leq x: (\forall k>1<n^2)(k|n^2 \Rightarrow \tau_2\left(\frac{n^2}{k}\right)<k)\}|\geq \\
|\{n\leq x: (\forall k>1)(k|n^2 \Rightarrow \tau_2(n^2)<k)\}|
\end{multline}

We use the following lemma to simplify the conditions of our set by replacing one occurrence of $n^2$ with $n$ in an effort to further conform the right hand side of (10) to a set like the one in (2):
\begin{lemma} If $z>1$ is any real number, then the statement ``$n$ is $z$-rough" is equivalent to the statement ``$n^2$ is $z$-rough."
\end{lemma}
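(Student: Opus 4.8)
The plan is to reduce both roughness conditions to a single condition on the set of prime divisors of $n$. Since by definition $n$ is $z$-rough precisely when every prime $p$ with $p \mid n$ satisfies $p > z$, and likewise $n^2$ is $z$-rough precisely when every prime dividing $n^2$ exceeds $z$, it suffices to prove that $n$ and $n^2$ have exactly the same set of prime divisors; that is,
\[
\{p \text{ prime}: p \mid n\} = \{p \text{ prime}: p \mid n^2\}.
\]
Once this set equality is in hand, the two statements ``$n$ is $z$-rough'' and ``$n^2$ is $z$-rough'' become word-for-word identical assertions about the same finite collection of primes, and the claimed equivalence follows immediately.

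To establish the set equality I would argue by two inclusions. The forward inclusion is trivial: if $p \mid n$, then $p \mid n \cdot n = n^2$, so every prime divisor of $n$ is a prime divisor of $n^2$. For the reverse inclusion, suppose $p$ is a prime with $p \mid n^2$. Writing $n^2 = n \cdot n$ and invoking the primality of $p$ (Euclid's lemma), we conclude that $p \mid n$. Hence every prime divisor of $n^2$ is a prime divisor of $n$, and the two inclusions together give the desired equality of prime-divisor sets.

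The one point worth flagging is precisely where primality is used: the reverse inclusion rests entirely on Euclid's lemma, and the analogous statement would fail for general (non-prime) divisors, since $n^2$ typically has strictly more divisors than $n$. Because the roughness condition quantifies only over \emph{prime} divisors, this is exactly the hypothesis we need, and no estimates or further machinery are required. I expect no genuine obstacle here; the lemma amounts to the elementary observation that $n$ and $n^2$ share the same radical, so their sets of prime factors coincide.
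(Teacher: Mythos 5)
Your proof is correct, but it takes a genuinely different route from the paper. You reduce the lemma to the equality of prime-divisor sets, $\{p : p \mid n\} = \{p : p \mid n^2\}$, with the nontrivial inclusion handled by Euclid's lemma applied to $n^2 = n \cdot n$; since the paper's definition of $z$-rough quantifies only over prime divisors, the equivalence then follows immediately. The paper instead introduces $D(n)$, the smallest divisor of $n$ exceeding $1$, proves by descent that $D(n)$ is prime, and then argues $D(n) = D(n^2)$ by observing that every divisor of $n^2$ that is not already a divisor of $n$ fails to be square-free, whereas primes are square-free. Your argument is shorter and isolates the essential fact more cleanly --- indeed, the paper's claim that elements of $L(n^2) \setminus L(n)$ are non-square-free itself rests, prime by prime, on exactly the Euclid's-lemma fact you invoke, so your proof is in a sense the kernel of theirs. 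What the paper's detour through $D(n)$ buys is a bonus not stated in the lemma: it verifies that the smallest nontrivial divisor of any $n$ is prime, which is the bridge between the prime-based definition of $z$-rough and the all-divisors condition defining $F(x,z)$ in equation (2), a compatibility the paper uses implicitly in the surrounding reduction. Your proof does not supply that bridge, but it is not part of the statement being proved, so no gap results.
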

\begin{proof} Let $D(n)$ denote the smallest non-unitary divisor of $n$. Note that $n$ is $D(n)$-rough for every $n$. We first show that $D(n)$ is prime by means of infinite descent.

Suppose that $D(n)$ was composite. Then there must exist a factorization 
$$D(n)=ab$$ 
where $a$ and $b$ are both positive integers and neither are equal to 1. However if this was the case, we would have $a<D(n)$ and since $a|n$, $D(n)$ would no longer be the smallest divisor of $n$. This is a contradiction, and thus  $D(n)$ must be prime. 

 To conclude the proof, we need to show that 
 $$D(n)\notin L(n^2)-L(n),$$
 or informally that $D(n)$ is not in the set of new factors created upon squaring $n$, since this would imply that 
  $$D(n)=D(n^2).$$
 This can be seen by noting that every element of the set $L(n^2)-L(n)$ is not square-free, and every prime by definition must be square-free.
\end{proof}
 Substituting, we have that 
 \begin{multline}
 |\{n\leq x: (\forall k>1)(k|n^2 \Rightarrow  \tau_2(n^2)<k)\}| = \\
 |\{n\leq x: (\forall k>1)(k|n \Rightarrow  \tau_2(n^2)<k)\}|.
\end{multline}
The following lemma allows us to make everything in terms of smooth functions, another necessary condition for the application of sieve methods. 

\begin{lemma}[Divisor Bound] For all $\epsilon>0$, there exists $n_0=n_0(\epsilon)$ which depends only on $\epsilon$ such that for all $n\geq n_0$, we have $$\tau_2(n)\leq n^{\frac{\log(2)+\epsilon}{\log(\log(n))}}.$$ 
\end{lemma}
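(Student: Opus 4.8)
The plan is to prove this classical divisor bound by the standard device of comparing $\tau_2(n)$ against $n^s$ for a parameter $s=s(n)$ that tends to $0$, and then optimizing over $s$. Writing $n=\prod_{i=1}^{r}p_i^{a_i}$ in its prime factorization, multiplicativity of $\tau_2$ gives $\tau_2(n)=\prod_{i=1}^{r}(a_i+1)$, so that
$$\frac{\tau_2(n)}{n^s}=\prod_{i=1}^{r}\frac{a_i+1}{p_i^{a_i s}}.$$
The idea is to control each factor on the right according to the size of the prime $p_i$ relative to the threshold $2^{1/s}$.

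First I would split the primes into two ranges. For the ``large'' primes $p_i\geq 2^{1/s}$ we have $p_i^{s}\geq 2$, and combined with the elementary inequality $a+1\leq 2^{a}$ (valid for every $a\geq 1$, by induction) this forces $\frac{a_i+1}{p_i^{a_i s}}\leq\frac{a_i+1}{2^{a_i}}\leq 1$; such factors can simply be discarded. For the ``small'' primes $p_i<2^{1/s}$ I would bound crudely: since $p_i^{a_i}\leq n$ we have $a_i\leq\frac{\log n}{\log 2}$, so each such factor is at most $\frac{\log n}{\log 2}+1$, and the number of small primes is at most $\pi(2^{1/s})\leq 2^{1/s}$. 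Combining the two estimates yields
$$\tau_2(n)\leq n^s\left(\frac{\log n}{\log 2}+1\right)^{2^{1/s}},$$
and hence, after taking logarithms,
$$\log\tau_2(n)\leq s\log n+2^{1/s}\log\left(\frac{\log n}{\log 2}+1\right).$$

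The final step is to optimize the choice of $s$. I would take $s=\frac{\log 2+\epsilon/2}{\log\log n}$, which makes the main term equal to $\frac{(\log 2+\epsilon/2)\log n}{\log\log n}$. With this choice $2^{1/s}=(\log n)^{\frac{\log 2}{\log 2+\epsilon/2}}=(\log n)^{1-\delta}$ for some $\delta>0$, so the second term has order $(\log n)^{1-\delta}\log\log n$, which is $o\!\left(\frac{\log n}{\log\log n}\right)$ since a power saving beats logarithmic factors. Absorbing this lower-order term into the main term, there exists $n_0(\epsilon)$ such that for all $n\geq n_0$ the right-hand side is at most $\frac{(\log 2+\epsilon)\log n}{\log\log n}$, which is exactly the claimed bound after exponentiating.

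The main obstacle, really the only delicate point, is the calibration of $s$: it must tend to zero slowly enough (like $1/\log\log n$) that the main term $s\log n$ keeps the correct order $\frac{\log n}{\log\log n}$, yet the constant in its numerator must be kept strictly above $\log 2$ so that the exponent $\frac{\log 2}{\log 2+\epsilon/2}$ governing the error factor $2^{1/s}$ falls below $1$ and the error is genuinely negligible. Everything else is elementary; in particular the inequality $a+1\leq 2^{a}$ and the trivial prime-counting bound $\pi(y)\leq y$ suffice, so no external analytic input is required.
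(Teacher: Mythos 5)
Your proof is correct, but it takes a genuinely different route from the paper, because the paper does not prove this lemma at all: its ``proof'' is simply a citation to Halberstam and Richert [1, page 294]. What you have written out is the standard self-contained Rankin-type argument for Wigert's divisor bound, and every step checks: the splitting of the prime factorization of $n$ at the threshold $2^{1/s}$; the discarding of large-prime factors via $a+1\leq 2^{a}$ combined with $p_i^{s}\geq 2$; the trivial bound $a_i\leq \frac{\log n}{\log 2}$ (from $2^{a_i}\leq p_i^{a_i}\leq n$) together with $\pi\left(2^{1/s}\right)\leq 2^{1/s}$ for the small primes; and the calibration $s=\frac{\log 2+\epsilon/2}{\log\log n}$, which makes the error term $2^{1/s}\log\left(\frac{\log n}{\log 2}+1\right)=(\log n)^{1-\delta}\cdot O(\log\log n)$ with $\delta=\frac{\epsilon/2}{\log 2+\epsilon/2}>0$, genuinely $o\!\left(\frac{\log n}{\log\log n}\right)$, so the reserved $\epsilon/2$ absorbs it for $n\geq n_0(\epsilon)$. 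Your diagnosis of the delicate point is also the right one: the constant in the numerator of $s$ must strictly exceed $\log 2$ so that the exponent governing $2^{1/s}$ drops below $1$. What your approach buys is self-containedness using only elementary inequalities and the trivial prime-counting bound, so that the paper's main theorem would then rest on external input only through the Selberg sieve lemma and Mertens' estimate; what the paper's citation buys is brevity, deferring to a reference where the same style of argument (and sharper asymptotic versions, with $\log 2+\epsilon$ replaced by $\log 2+o(1)$) is carried out. One cosmetic remark: you should note that $n_0$ must in particular be large enough that $\log\log n$ is positive and $s<1$, but since you quantify over $n\geq n_0(\epsilon)$ this is automatic and not a gap.
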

\begin{proof} See [1, page 294] 
\end{proof} 

For convenience, we define $g_\epsilon(x):=x^\frac{\log(2)+\epsilon}{\log(\log(x))}$ so that Lemma 4 can be restated as $\forall n\geq n_0, \tau_2(n)\leq g_\epsilon(n)$. Using this, we have 
\begin{multline}
|\{n\leq x: (\forall k>1)(k|n \Rightarrow \tau_2(n^2)<k\}|\geq \\
|\{n_0\leq n\leq x: (\forall k>1)(k|n \Rightarrow g_\epsilon(n^2)<k)\}|
\end{multline}
for some $\epsilon>0$ to be chosen at one's disposal. (Note: we may assume that $x\geq n_0$ since we are assuming the implied constant in the theorem is sufficiently large). 
Lastly, we use the relation 
$$\forall n\leq x, g_\epsilon(n^2)\leq g_\epsilon(x^2)$$
 (since $x$ is sufficiently large) to make the conditions of the set on the right hand side of (12) further independent of $n$: 
\begin{multline*}
|\{n_0\leq n\leq x: (\forall k>1)(k|n \Rightarrow  g_\epsilon(n^2)<k)\}|\geq \\ 
|\{n_0\leq n\leq x: (\forall k>1)(k|n \Rightarrow  g_\epsilon(x^2)<k)\}|,
\end{multline*}
and, in view of the definition of $F(x,z)$ (and the fact that $n_0$ is a constant), we equivalently have 
\begin{equation}
|\{n_0\leq n\leq x: (\forall k>1)(k|n \Rightarrow  g_\epsilon(n^2)<k)\}|\geq \\ 
|F(x,g_\epsilon(x^2))| + O(1)
\end{equation}
 This finally allows us to use Sieve Theory to count our set. In order to do this, we appropriately need a lemma from Sieve Theory.
 \begin{lemma}[Fundamental Lemma of the Selberg Sieve]
 For all $x>2$ and for all $1<z\leq x$, we have 
 $$|F(x,z)|=x\prod_{p\leq z}\left(1-\frac{1}{p}\right)\left(1+O\left(e^\frac{-\log(x)}{2\log(z)}\right)\right)$$
\end{lemma}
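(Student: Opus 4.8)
The plan is to recognize $|F(x,z)|$ as the classical \emph{sifting function} $\Phi(x,z)=\sum_{n\le x}\mathbf{1}[(n,P(z))=1]$, where $P(z)=\prod_{p\le z}p$, and to evaluate it by a truncated inclusion--exclusion. The starting point is Legendre's identity $\Phi(x,z)=\sum_{d\mid P(z)}\mu(d)\lfloor x/d\rfloor$, whose naive main term $x\sum_{d\mid P(z)}\mu(d)/d=x\prod_{p\le z}(1-1/p)$ is exactly the product we want. The obstruction is that the naive remainder is $\sum_{d\mid P(z)}1=2^{\pi(z)}$, which dwarfs $x$. So the real work is to replace the full M\"obius sum by upper- and lower-bound sieve weights $\lambda_d^{\pm}$ that are supported on a controlled level $d\le D$ while barely disturbing the main term.

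First I would introduce Brun's combinatorial weights $\lambda_d^{\pm}$ (equivalently, the truncated Bonferroni inequalities), which satisfy $|\lambda_d^{\pm}|\le 1$, are supported on divisors $d\mid P(z)$ with $d\le D$, and obey $\sum_{d\mid k}\lambda_d^-\le \mathbf{1}[k=1]\le\sum_{d\mid k}\lambda_d^+$. Taking $D=x^{1/2}$ and summing these inequalities over $n\le x$ gives
\begin{equation}
\Phi(x,z)=x\sum_{\substack{d\mid P(z)\\ d\le D}}\frac{\lambda_d^{\pm}}{d}+O(D),
\end{equation}
since the remainder is at most $\sum_{d\le D}|\lambda_d^{\pm}|\le D$. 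The heart of the matter, the \emph{fundamental lemma} proper, is then the assertion that truncating at level $D$ changes the main sum only by a relative factor $1+O(e^{-u})$, where $u=\log D/\log z=\tfrac{\log x}{2\log z}$; that is, $\sum_{d\mid P(z),\,d\le D}\lambda_d^{\pm}/d=\prod_{p\le z}(1-1/p)\,(1+O(e^{-u}))$. This is proved by estimating the tail of the inclusion--exclusion: if $\Omega(d)$ denotes the number of prime factors of $d$, then the contribution of $d$ with $\Omega(d)=k$ to $\sum\mu^2(d)/d$ is at most $\tfrac{1}{k!}(\sum_{p\le z}1/p)^k$, which by Mertens' theorem is $\tfrac{1}{k!}(\log\log z+O(1))^k$, and a Stirling-type bound on this tail weighed against the level constraint $d\le D=z^u$ yields the exponential saving $e^{-u}$.

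Finally I would assemble the pieces. By Mertens' theorem $\prod_{p\le z}(1-1/p)\asymp 1/\log z$, so the claimed relative error corresponds to an absolute error of order $\tfrac{x}{\log z}e^{-u}$ with $e^{-u}=e^{-\log x/(2\log z)}$; a direct comparison shows the remainder $D=x^{1/2}$ is comfortably absorbed into this (the interesting regime being $u\to\infty$, while for bounded $u$ the error term is $O(1)$ and the formula reduces to a trivial upper bound). Combining the upper and lower weights then pins $\Phi(x,z)$ between two quantities each equal to $x\prod_{p\le z}(1-1/p)(1+O(e^{-u}))$, which is precisely the stated estimate with $u=\tfrac{\log x}{2\log z}$. \textbf{The main obstacle} is the tail estimate of the previous paragraph: showing that truncating the inclusion--exclusion at level $D$ perturbs the main term by only $O(e^{-u})$ requires carefully balancing the depth of truncation (which must be large, on the order of $\Omega(d)\gg\log\log z$, to make the combinatorial tail small) against the level constraint $d\le D$ (which caps the remainder at $O(x^{1/2})$). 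It is exactly this balancing that forces the use of Brun's position-dependent truncation rather than a naive cutoff on $\Omega(d)$, the latter producing a remainder far larger than $x$ in the relevant range $\log z\asymp\log x/\log\log x$.
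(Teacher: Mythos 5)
Your proposal is correct in outline, but note that the paper does not actually prove this lemma: its ``proof'' is the citation ``See [2, pages 208--209]'' (and since [2] is Apostol, which contains no sieve theory, the intended reference is presumably Halberstam--Richert [1]). You instead sketch the standard self-contained argument via Brun's combinatorial sieve: Legendre's identity, truncated weights $\lambda_d^{\pm}$ at level $D=x^{1/2}$, and the tail estimate showing truncation perturbs $\prod_{p\le z}\left(1-\frac{1}{p}\right)$ by a factor $1+O(e^{-u})$ with $u=\frac{\log x}{2\log z}$, which reproduces exactly the error term the paper states. Your closing paragraph also correctly isolates the genuine difficulty, and it matters here: in the regime the paper actually uses, $\log z\asymp \log x/\log\log x$, one has $u\asymp\log\log z$, so a naive cutoff at $\Omega(d)\le u$ leaves a Bonferroni tail $\sum_{j>u}\frac{(\log\log z+O(1))^j}{j!}$ that dwarfs the main term $\asymp 1/\log z$; Brun's position-dependent truncation (or Buchstab iteration) is genuinely required, so your middle paragraph's naive Stirling bound should be read as heuristic, superseded by that correction. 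One quantitative slip to repair: for bounded $z$ (say $z=2$) one has $e^{-u}=x^{-1/(2\log 2)}$, so the permitted absolute error $\asymp \frac{x}{\log z}e^{-u}\asymp x^{0.279}$ is \emph{smaller} than your remainder $O(x^{1/2})$, which is therefore not ``comfortably absorbed''; this is fixed by bounding the remainder instead by the number of squarefree $z$-smooth $d\le D$, namely $2^{\pi(z)}=O(1)$ in that range (or by treating $z=O(1)$ directly), after which the estimate holds uniformly for $1<z\le x$. As for what each approach buys: the paper's citation keeps the exposition short and leans on a classical result, while your route makes the argument self-contained and incidentally shows the stated error $e^{-\log x/(2\log z)}$ is weaker than what the classical fundamental lemma delivers (errors of shape $O(e^{-u\log u})$ or $O(u^{-u/2})$), so there is slack in the lemma as stated.
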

\begin{proof} See [2, page 208-209].
\end{proof}

Using Lemma 5 with $z=g_\epsilon(x^2)$ yields 
\begin{equation}
\begin{split}
|F(x,g_\epsilon(x^2))| &= x\prod_{p\leq g_\epsilon(x^2)}\left(1-\frac{1}{p}\right)\left(1+O\left(e^\frac{-\log(x)}{2\log(g_\epsilon(x^2))}\right)\right) \\
											         &=x\prod_{p\leq x^{\frac{2\log(2)+\epsilon}{\log(2\log(x))}}}\left(1-\frac{1}{p}\right)\left(1+O\left(\frac{1}{\log(x)}\right)\right)
\end{split}
\end{equation}
Using Mertens' estimate 
$$\prod_{p<z}\left(1-\frac{1}{p}\right)=\frac{e^{-\gamma}}{\log(z)}\left(1+O\left(\frac{1}{\log(z)}\right)\right),$$ 
(see [2, page 20]), we have 
\begin{equation}
|F(x,g_\epsilon(x^2))| = \frac{\frac{1}{e^\gamma (2\log(2)+\epsilon)}x\log(\log(x))}{\log(x)}\left(1+O\left(\frac{\log(\log(x))}{\log(x)}\right)\right).
\end{equation}
Thus, since we have shown using (3) and (8)--(13) that 
\begin{equation*}
|F(x,g_\epsilon(x^2))|\leq |\{n\leq x: M(n)\neq0\}|,
\end{equation*} 
we have that
\begin{equation*}
|\{n\leq x: M(n)\neq0 \}|\geq \frac{C_\epsilon x\log(\log(x))}{\log(x)}\left(1+O\left(\frac{\log(\log(x))}{\log(x)}\right)\right).
\end{equation*}

\section{The Way Forward}
The lower bound for $|\{n\leq x: M(n)\neq0\}|$ we have proven is much smaller than what is expected to be the truth (since every natural number $n$ is believed to satisfy $M(n)\neq0$, this quantity should be $x$). The majority of the loss seems to be acquired from the manner in which the divisor bound was applied. Thus, it only seems natural to, in order to improve our result, bound the set on the right hand side of (11) more directly, since this is the set to which the divisor bound is directly applied in the above argument. The following is the method by which it is suggested one should go about this.

Let
$$
T_j(n) = \left\{
\begin{array}{ll}
1 & \quad \tau_2(n)=j \\
0 & \quad otherwise
\end{array}
\right.
,$$
and let 
$$
P_z(n) = \left\{
\begin{array}{ll}
1 & \quad (\forall d>1)(d|n \Rightarrow d>z) \\
0 & \quad otherwise
\end{array}
\right.
.$$

We can write the indicator function for the condition $$(\forall d>1)(d|n\Rightarrow d>\tau_2(n^2))$$ in terms of the functions $T_j$ and $P_j$ and obtain an expression for the right hand side of (11) by the following:
$$|\{n\leq x: (\forall d>1) (d|n\Rightarrow d>\tau_2(n^2))\}|=\sum_{n\leq x} \sum_{j\in I(x)} T_j(n^2)P_j(n),$$
where 
\begin{equation*}
I(x):=\{j:(\exists n)(n\leq x:\tau_2(n^2)=j)\}
\end{equation*}
is the set of all values the function $\tau_2(n^2)$ can take for $n\leq x$. Informally, one can see that the innermost sum is the rewritten expression for the indicator function of the desired condition. 

By the Divisor Bound, one has that 
\begin{equation*}
I(x)\subseteq\{j: 2\leq j\leq g_\epsilon(x^2)\}.
\end{equation*}
Since the sum 
\begin{equation*}
\sum_{j\in I(x)}T_j(n^2)P_j(n)
\end{equation*}
is an indicator function and thus can only take the values 0 or 1 by definition, we can  substitute in this superset of $I(x)$ while maintaining equality:  
\begin{equation*}
\sum_{n\leq x} \sum_{j\in I(x)} T_j(n^2)P_j(n) = \sum_{n\leq x} \sum_{2\leq j \leq g_\epsilon(x^2)} T_j(n^2)P_j(n) = \sum_{2\leq j \leq g_\epsilon(x^2)} \sum_{n\leq x} T_j(n^2)P_j(n).
\end{equation*}

We further reduce to only counting those numbers that are square-free, a reduction which is beneficial for a number of reasons. Most importantly, we have that 
$$\sum_{n\leq x}\mu^2(n)=\left(\frac{6}{\pi^2}+o(1)\right)x,$$
(see [3]) where $\mu$ is the M{\"o}bius function, so heuristically it should only decrease the size of our set by at most a constant factor. Furthermore, it decreases the difficulty of the calculations tremendously, an idea that will become more apparent shortly. Plugging this in, we obtain 
$$\sum_{2\leq j \leq g_\epsilon(x^2)} \sum_{n\leq x} T_j(n^2)P_j(n)\geq\sum_{2\leq j \leq g_\epsilon(x^2)} \sum_{n\leq x} T_j(n^2)P_j(n)\mu^2(n)$$
Making the change of variables $j=3^r$ (where we know $r$ is a positive integer precisely because $n$ is square-free), we get 
$$\sum_{2\leq j \leq g_\epsilon(x^2)} \sum_{n\leq x} T_j(n)P_j(n)\mu^2(n)=\sum_{1\leq r \leq \log_3(g_\epsilon(x^2))} \sum_{n\leq x} T_{3^r}(n^2)P_{3^r}(n)\mu^2(n).$$
Notice from the identity 
$$\tau_2(n)=\prod_{p^\alpha||n} (\alpha+1)$$
that the statement   $\tau_2(n^2)=3^r$ is equivalent to the statement $\tau_2(n)=2^r$ if $n$ is square free, and thus we have 
$$\sum_{1\leq r \leq \log_3(g_\epsilon(x^2))} \sum_{n\leq x} T_{3^r}(n^2)P_{3^r}(n)\mu^2(n)=\sum_{1\leq r \leq \log_3(g_\epsilon(x^2))} \sum_{n\leq x} T_{2^r}(n)P_{3^r}(n)\mu^2(n)$$
Upon interpretation of each indicator function, one can see that the summand of the innermost sum is now just the indicator function of those $n$ which are less than or equal to $x$ and are a product of $r$ distinct primes all of which are greater than $3^r$. Utilizing this in the sum, we obtain 
\begin{equation}
\sum_{1\leq r \leq \log_3(g_\epsilon(x^2))} \sum_{n\leq x} T_{2^r}(n)P_{3^r}(n)\mu^2(n)=\sum_{1\leq r \leq \log_3(g_\epsilon(x^2))} \sum_{\substack{p_1 \times...\times p_r\leq x \\ 3^r<p_1<...<p_r}} 1
\end{equation}
Notice that if we did not restrict our sum to square-free numbers, we would have to include a third sum into the right hand side of (16) to sum over all $l$-tuples $(a_1,...,a_l)$ that satisfy $$\prod_i (a_i +1)=k$$ for each value of $k$, but since $n$ is square-free, the only tuples we need to concern ourselves with are those of the form $(2,...,2)$, significantly reducing the work involved.

Now, the innermost sum on the right hand side in (16) seems to be relatively easy (yet tedious) to calculate by means of splitting up the sum into $r$ 1-dimensional sums and dealing with each sum individually (as long as $r$ grows slowly enough with $x$ which can be controlled by truncating the outer sum earlier at the cost of a lower bound) using the Prime Number Theorem or Mertens' other theorem 
$$\sum_{p\leq x} \frac{1}{p}=\log(\log(x))+O(1)$$
(see [1, page 90] for a proof). This task, while seemingly doable, will not be further discussed in this paper and is left as an open problem to the reader. 

There are limitations to our entire method in general, of course. It is generally believed that the insertion of Lemma 1, the heart of the method, limits one from getting a bound of even the form $$|\{n\leq x: M(n)\neq0 \}|>\epsilon x$$ for any $\epsilon>0$. Thus, a new idea is needed for such a bound.

\section{Acknowledgments} I would like to thank Stephen Cochran for his unrelenting support and motivation throughout this rigorous process. Also, this paper would not have been written if not for the subtle insight and constant motivation from my good friend Todd Fellman. Last but most certainly not least, I would like to thank Dr. Xiao-dong Zhang, Dr. Stephen Locke, Dr. Tomas Schonbek, and Dr. Markus Schmidmeier for their priceless guidance in the editory process. I am forever indebted to you all. 

\section{References}
\begin{enumerate}
\item{Halberstam, H., \& Richert, H. E. (2013). \textit{Sieve methods}. Courier Corporation.}
\item{Apostol, T. M. (2013). \textit{Introduction to analytic number theory}. Springer Science \& Business Media.}
\item{Pawlewicz, J. (2011). Counting square-free numbers. \textit{arXiv preprint arXiv:1107.4890.}}
\end{enumerate}

\end{document}